\documentclass[11pt, reqno]{amsart}
\oddsidemargin = 0cm \evensidemargin = 0cm \textwidth = 15cm

\usepackage{amsfonts}
\usepackage{amsmath}
\usepackage{amssymb, color}
\usepackage{amscd}
\usepackage[mathscr]{eucal}
\usepackage{url}
\usepackage{enumerate}

\usepackage{tikz}
\usetikzlibrary{decorations.markings, arrows.meta}
\tikzset{
mid arrow/.style={postaction={decorate, decoration={
markings,
mark=at position 1.0 with {\arrow{Straight Barb}}
}}},
}
\allowdisplaybreaks[1]







\theoremstyle{plain}

\newtheorem{theorem}{Theorem}[section]

\newtheorem{prop}[theorem]{Proposition}
\newtheorem{cor}[theorem]{Corollary}
\theoremstyle{definition}

\numberwithin{equation}{section}

\newcommand{\N}{\textup{N}}
\newcommand{\IN}{\textup{IN}}
\newcommand{\ON}{\textup{ON}}

\setlength{\textfloatsep}{0pt plus 1.0pt minus 1.0pt}

\makeatletter
\def\imod#1{\allowbreak\mkern5mu({\operator@font mod}\,\,#1)}
\makeatother

\makeatletter
\@namedef{subjclassname@2020}{%
  \textup{2020} Mathematics Subject Classification}
\makeatother


\begin{document}

\title[Mathon-type digraphs]{A Mathon-type construction for digraphs and improved lower bounds for Ramsey numbers}

\author{Dermot M\lowercase{c}Carthy, Chris Monico}

\address{Dermot M\lowercase{c}Carthy, Department of Mathematics \& Statistics, Texas Tech University, Lubbock, TX 79410-1042, USA}
\email{dermot.mccarthy@ttu.edu}
\urladdr{https://www.math.ttu.edu/~mccarthy/}

\address{Chris Monico, Department of Mathematics \& Statistics, Texas Tech University, Lubbock, TX 79410-1042, USA}
\email{C.Monico@ttu.edu}
\urladdr{https://www.math.ttu.edu/~cmonico/}

\subjclass[2020]{Primary: 05C25, 05C55; Secondary: 05C25}

\begin{abstract}
We construct an edge-colored digraph analogous to Mathon's construction for undirected graphs.
We show that this graph is connected to the $k$-th power Paley digraphs and we use this connection to produce improved lower bounds for multicolor directed Ramsey numbers. 
\end{abstract}

\maketitle


\section{Introduction}\label{sec_Intro}
In \cite{Ma}, Mathon leveraged properties of generalized Paley graphs to improve lower bounds on diagonal multicolor (undirected) Ramsey numbers. He did this by constructing a multicolored graph which contained monochromatic induced subgraphs isomorphic to the generalized Paley graph. Among his results were $R(7,7) \geq 205$, $R(9,9) \geq 565$, $R(10,10) \geq 798$ and $R_3(4) \geq 128$, which are still the best known lower bounds today \cite{R}. Independently, Shearer \cite{Sh} produced the same results in the two-color case using an equivalent construction. More recently, Xu and Radziszowski \cite{XR} made incremental improvements to Mathon's construction and showed that $R_3(7) \geq 3214$ (increased from Mathon's 3211), which is the current best known lower bound.  

In this paper, we adapt Mathon's construction to digraphs and leverage properties of $k$-th power Paley digraphs to produce improved lower bounds for diagonal multicolor directed Ramsey numbers. For the remainder of this paper all Ramsey numbers will be directed, and will be denoted $R_t(m)$. As such, $R_t(m)$ is the least positive integer $n$ such that any tournament with $n$ vertices, whose edges have been colored in $t$ colors, contains a monochromatic transitive subtournament of order $m$. When $t=1$ we recover the usual directed Ramsey number $R(m)$, so we drop the subscript in this case. Recall, a tournament is transitive if, whenever $a \to b$ and $b \to c$, then $a \to c$. 
Our main results which improve on the previously best known lower bounds can be summarized as follows.
\begin{theorem}\label{thm_k=2}
$R(8) \geq 57, R(11) \geq 169, R(12) \geq 217, R(14) \geq 401, R(15) \geq 545, R(16) \geq 737, R(17) \geq 889, R(18)\geq 1241, R(19) \geq 1321$ and  $R(20) \geq 1945.$
\end{theorem}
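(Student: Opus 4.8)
The plan is to certify each inequality $R(m)\ge n$ in the standard way: by producing a tournament on $n-1$ vertices that contains no transitive subtournament of order $m$. By the definition of $R(m)$ recalled above, any single such tournament forces $R(m)\ge n$, so the work is to exhibit one witness for each entry of the statement. These witnesses will come from the Mathon-type construction of this paper — together with the $k$-th power Paley digraphs it is built on, here with $k=2$ — and the role of the connection between the two is precisely to let us compute the largest transitive subtournament of the (large) witness in terms of that of a much smaller digraph.

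Concretely, I would proceed in four steps. First, for each target order $m$ in the list, choose a prime power $q$ and the blow-up parameters so that the associated Mathon-type digraph $M$ has exactly $n-1$ vertices, and check the arithmetic conditions on $q$ — congruences fixing the power-residue class of $-1$ and of the coset representatives used to orient the arcs — that guarantee $M$ is a genuine tournament, i.e.\ that exactly one of $u\to v$ and $v\to u$ holds for every pair of distinct vertices $u,v$. Second, apply the structural correspondence between $M$ and the underlying $k$-th power Paley digraph $P$ on $\mathbb{F}_q$ to reduce the largest transitive subtournament of $M$ to data about $P$ alone: the largest transitive subtournament of $P$ (a digraph on only $q$ vertices) together with an elementary count of how transitive pieces coming from the fibers and from $P$ can be concatenated inside $M$. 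Third, determine the transitive subtournament number of $P$ exactly: since $P$ is vertex-transitive under the affine maps $x\mapsto ax+b$ with $a$ a $k$-th power residue, and also carries the Frobenius and multiplier automorphisms, one may fix the first vertex — and usually several further vertices — of a hypothetical transitive subtournament and run a pruned, exhaustive backtracking search. Fourth, combine the previous two steps to read off the transitive subtournament number of $M$, verify it is at most $m-1$, and conclude $R(m)\ge n$; among admissible choices of parameters one keeps those maximizing $n-1$ subject to this bound.

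The crux — and the step I expect to be the main obstacle — is the transitive-subtournament computation together with the reduction that makes it feasible. Finding the largest transitive subtournament of a tournament is a clique-type search problem, NP-hard in general, and for the larger entries (say $R(20)\ge 1945$, where $M$ has close to two thousand vertices) a direct search on $M$ is out of reach. Everything therefore rests on the connection theorem of the second step trading one intractable search on $M$ for a tractable search on the small digraph $P$ plus routine bookkeeping, and on the automorphism group of $P$ pruning even that search down to a size a computer can finish while still certifying exhaustiveness. A secondary and more routine obstacle is the arithmetic of the first step: for each candidate $q$ one must correctly locate $-1$ and the relevant coset representatives among the power-residue classes so as to be sure $M$ is a tournament of the claimed order, and one must do this uniformly for all ten entries.
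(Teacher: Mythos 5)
Your plan is essentially the paper's own proof: for each entry the witness is the Mathon-type tournament $M_2^{\ast}(q)$ on $2(q+1)$ vertices, the connection theorem (Theorem \ref{thm_Main}/Corollary \ref{cor_Main}, bounding the largest transitive subtournament of $M_2^{\ast}(q)$ by two more than that of the Paley digraph $G_2(q)$) replaces an infeasible search on $M_2^{\ast}(q)$ by one on $G_2(q)$, and an exhaustive computer search certifies $\mathcal{K}_{m-2}(G_2(q))=0$ for the relevant $q$ (e.g.\ $q=27$ for $R(8)\geq 57$, $q=971$ for $R(20)\geq 1945$), exactly as in Section \ref{sec_Results}. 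One minor correction: no congruence on $q$ makes $M_2(q)$ itself a tournament --- the $q+1$ doubled color-$0$ pairs must be given arbitrary orientations to form $M_2^{\ast}(q)$, and the reduction theorem is proved so that the bound holds for every such choice, which your step two would need to accommodate rather than your step one.
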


\begin{theorem}\label{thm_k>2}
For $t \geq 4$,
$$R_t(3) \geq 169 \cdot 3^{t-4}+1.$$
For $t \geq 2$,
$$R_t(6) \geq 829 \cdot 27^{t-2} +1  \qquad \text{and} \qquad
R_t(8) \geq 3320 \cdot 56^{t-2} +1.$$
\end{theorem}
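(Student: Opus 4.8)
The plan is to reduce each inequality $R_t(m)\ge N+1$ to the explicit construction of a $t$-edge-colored tournament on $N$ vertices containing no monochromatic transitive subtournament of order $m$; I will call such a tournament \emph{$(m,t)$-good}. The proof then has two parts: a short list of ``seed'' $(m,t)$-good tournaments obtained from the Mathon-type digraph of the preceding sections, together with an elementary amplification step which turns an $(m,t)$-good tournament into an $(m,t+1)$-good one by blowing up the vertex set. Iterating the amplification from the seeds yields all the stated bounds.

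\emph{The amplification step.} Let $T$ be an $(m,t)$-good tournament on $n$ vertices and let $U$ be any tournament on $u$ vertices with no transitive subtournament of order $m$ whatsoever; the largest admissible $u$ is $R(m)-1$. Form the lexicographic product $W=T[U]$ on vertex set $V(T)\times V(U)$: for $(a,x)\ne(b,y)$ put $(a,x)\to(b,y)$ with the color of the arc $a\to b$ of $T$ when $a\ne b$, and with a fresh color $t+1$ following the arc $x\to y$ of $U$ when $a=b$. Then $W$ is a $(t+1)$-edge-colored tournament on $nu$ vertices. A monochromatic transitive subtournament of color $t+1$ lies inside a single fiber $\{a\}\times V(U)$, hence has order $<m$; a monochromatic transitive subtournament in a color $c\le t$ must have pairwise distinct first coordinates, since two vertices in a common fiber span a color-$(t+1)$ arc, and therefore projects onto a monochromatic transitive subtournament of the same order in color $c$ of $T$, again of order $<m$. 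Thus $W$ is $(m,t+1)$-good, which gives
$$R_{t+1}(m)\ \ge\ \bigl(R_t(m)-1\bigr)\bigl(R(m)-1\bigr)+1 ,$$
and inductively
$$R_t(m)\ \ge\ \bigl(R_{t_0}(m)-1\bigr)\bigl(R(m)-1\bigr)^{\,t-t_0}+1\qquad(t\ge t_0).$$

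\emph{The single-color values.} One needs $R(m)-1$ for $m=3,6,8$. Here $R(3)=4$: in a tournament with no transitive triangle every vertex has out-degree and in-degree at most $1$ (a vertex dominating two others, or dominated by two others, forces a transitive triangle), so there are at most $3$ vertices, while the $3$-cycle realizes $3$. The value $R(6)=28$ is classical, and $R(8)\ge 57$ is Theorem~\ref{thm_k=2}. Hence the amplification multiplier $R(m)-1$ equals $3$, $27$ and $56$ in the three cases.

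\emph{The seed constructions, and the main obstacle.} It remains to exhibit $(3,4)$-, $(6,2)$- and $(8,2)$-good tournaments on $169$, $829$ and $3320$ vertices, i.e.\ $R_4(3)\ge 170$, $R_2(6)\ge 830$ and $R_2(8)\ge 3321$. I would obtain these by instantiating the Mathon-type digraph of the earlier sections over a suitable finite field $\mathbb F_q$ with a suitable value of $k$, chosen so that the construction has exactly the required number of vertices while using $4$, $2$ and $2$ colors respectively. By the correspondence established earlier — each color class of the Mathon-type digraph is an induced sub-digraph of the $k$-th power Paley digraph — a monochromatic transitive subtournament of order $m$ in the construction would force a transitive subtournament of order $m$ inside the associated $k$-th power Paley digraph. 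So it suffices to certify that those particular $k$-th power Paley digraphs contain no transitive subtournament of the forbidden order. I expect this last step to be the crux: the structural reduction is clean, but the transitive-subtournament numbers of the base Paley digraphs (on a few hundred up to a few thousand vertices) have to be pinned down by a feasible but nontrivial computer search, and $q$ and $k$ must be tuned so that the vertex counts land exactly on $169$, $829$ and $3320$ while the color count stays at $4$, $2$, $2$. With the seeds in hand, iterating the amplification step from $t_0=4$ when $m=3$ and from $t_0=2$ when $m=6,8$ produces $R_t(3)\ge 169\cdot 3^{\,t-4}+1$ for $t\ge 4$, and $R_t(6)\ge 829\cdot 27^{\,t-2}+1$ and $R_t(8)\ge 3320\cdot 56^{\,t-2}+1$ for $t\ge 2$.
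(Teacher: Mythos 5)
Your amplification step is correct and is exactly the mechanism the paper uses: the lexicographic-product argument reproves the recursion $R_t(m)\ge (R_{t-1}(m)-1)(R(m)-1)+1$ (which the paper simply cites from Manoussakis--Tuza), and the multipliers $3$, $27$, $56$ coming from $R(3)=4$, $R(6)=28$ and $R(8)\ge 57$ (Theorem \ref{thm_k=2}) are the right ones. The gap is in how you propose to obtain the seeds. The Mathon-type tournament $M_k^{\ast}(q)$ has $k(q+1)$ vertices, which is even, so no choice of $k$ and $q$ can make it land on $169$ or $829$ vertices; moreover Corollary \ref{cor_Main} requires $m\ge k$, which fails for $m=3$ with four colors (that would force $k=8$). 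In the paper the seeds $R_4(3)\ge 170$ and $R_2(6)\ge 830$ do not come from the Mathon construction at all: they are the multicolor Paley tournaments $P_8(169)$ and $P_4(829)$ themselves, which avoid monochromatic transitive subtournaments of orders $3$ and $6$ because of the computer-verified facts $\mathcal{K}_3(G_8(169))=0$ and $\mathcal{K}_6(G_4(829))=0$ (Table \ref{tab_Largest_q_k>2}), giving $R_{k/2}(m)\ge q_{m,k}+1$ directly. Your plan of ``tuning $q$ and $k$ so the Mathon-type digraph has exactly $169$ or $829$ vertices'' cannot be carried out.

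Only the third seed, $R_2(8)\ge 3321$, uses the Mathon construction, namely $M_4^{\ast}(829)$ on $4\cdot 830=3320$ vertices via Corollary \ref{cor_Main}, and here your stated reduction is not the one the paper proves. It is not true (and not established anywhere in the paper) that a monochromatic transitive subtournament of order $m$ in $M_k^{\ast}(q)$ forces one of order $m$ in $P_k(q)$; Theorem \ref{thm_Main} only yields one of order $m-2$, because up to two vertices (the apex $a_1$ and possibly one vertex joined to it by reassigned color-$0$ edges) must be discarded before the remaining vertices sit inside an out-neighborhood $\ON_i(a_1)\cong P_k(q)$ (Proposition \ref{prop_ONPaley}); also ``each color class of $M_k(q)$ is an induced subdigraph of $P_k(q)$'' cannot be right, since a color class has $k(q+1)>q$ vertices. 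Consequently the computational certificate you would need for this seed is $\mathcal{K}_6(G_4(829))=0$, not the absence of transitive subtournaments of order $8$ in the Paley digraph; with your (unproven, stronger) correspondence the verification you describe would not justify the bound. With the seeds corrected as above, the rest of your argument does give the stated inequalities.
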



\section{Preliminaries and Notation}\label{sec_Notation}
For a graph $G$, we denote its vertex set by $V(G)$, so the order of $G$ is $\#V(G)$. 
For a vertex $v$ of a digraph $G$, we will denote the set of vertices which are out-neighbors of $v$ by $\ON(v)$ and the set of in-neighbors by $\IN(v)$. If the edges of $G$ are colored, we will denote the set of out-neighbors (resp.~in-neighbors) of $v$ connected via an edge of color $i$ by $\ON_i(v)$ (resp. $\IN_i(v)$). We define the set of neighbors of $v$ as $\N(v) := \ON(v) \cap \IN(v)$ and the set of color $i$ neighbors as $\N_i(v) := \ON_i(v) \cap \IN_i(v)$. We will refer to any collection of vertices in $G$, which are pairwise connected via two edges oriented in opposite directions, as a clique. Further, if all those edges are of color $i$, we will refer to it as a color $i$ clique.

We note that a tournament of order $m$ is transitive if and only if the set of out-degrees of its vertices is $\{0,1, \ldots ,m-1\}$ \cite[Ch.~7]{Mo}. Thus, we can represent a transitive subtournament of order $m$ by the $m$-tuple of its vertices $(a_1, a_2, \ldots, a_m)$, listed in order such that the out-degree of vertex $a_i$ is $m-i$, i.e. the corresponding $m$-tuple of out-degrees is $(m-1, m-2, \ldots, 1, 0)$. 
We let $\mathcal{K}_m(G)$ denote the number of transitive subtournaments of order $m$ contained in a digraph $G$.


\section{Mathon-Type Construction for Digraphs}\label{sec_Con}
Let $k \geq 2$ be an even integer. Let $q$ be a prime power such that $q \equiv k+1 \imod {2k}$.
This condition ensures that $-1$ is not a $k$-th power in $\mathbb{F}_q$, the finite field with $q$ elements, but is a $\frac{k}{2}$-th power.
Let $S_k$ be the subgroup of the multiplicative group $\mathbb{F}_q^{\ast}$ of order $\frac{q-1}{k}$ containing the $k$-th power residues, i.e., if $\omega$ is a primitive element of $\mathbb{F}_q$, then $S_k = \langle \omega^k \rangle$. 
We define $S_{k,0}:=\{0\}$ and $S_{k,i} := \omega^{i-1} S_k$, for $1 \leq i \leq \tfrac{k}{2}$, so that $S_{k,1}=S_k$.
We note that $-S_{k,i}= \omega^{\frac{k}{2}} S_{k,i}$ (as $-1=\omega^{\frac{q-1}{2}}$ and $\frac{q-1}{2} \equiv \frac{k}{2} \imod{k}$), yielding the disjoint union $$\mathbb{F}_q = S_{k,0} \, \cup \, \bigcup_{i=1}^{k/2} S_{k,i} \, \cup \, \bigcup_{i=1}^{k/2} -S_{k,i}.$$

Let $X:=(\mathbb{F}_q \times \mathbb{F}_q) \setminus \{(0,0)\}$. We define an equivalence relation $\sim$ on $X$ where $(a,b) \sim (c,d)$ if $(c,d)=(ag,bg)$ for some $g \in S_k$. We denote the equivalence class of $(a,b)$ by $[a,b]$. There are $n:=k(q+1)$ such equivalence classes, each containing $|S_k| =\frac{q-1}{k}$ elements.
Let $M_k(q)$ be the edge-colored digraph of order $n$, with vertex set $X/\sim$, where $[a,b] \to [c,d]$ is an edge in color $i$, $0 \leq i \leq \frac{k}{2}$, if and only if $bc-ad \in S_{k,i}$.
We note that this is well-defined as $g S_{k,i}= S_{k,i}$ for all $g \in S_k$. We also note that any pair of vertices of $M_k(q)$ will either be connected by a single oriented edge in color $i$, for some $i \leq i \leq \frac{k}{2}$, or, connected by two edges of color $0$ oriented in opposite directions. 
For ease of illustration in what follows, we will represent the former case by $v_1 \stackrel{i}{\to} v_2$ and the latter case by $v_1 \stackrel{0}{\longleftrightarrow} v_2$.

\begin{prop}\label{prop_VertTrans}
$M_k(q)$ is vertex transitive.
\end{prop}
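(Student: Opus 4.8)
The plan is to exhibit, for each pair of vertices $[a,b]$ and $[c,d]$, an automorphism of $M_k(q)$ carrying one to the other, by letting $GL_2(\mathbb{F}_q)$ act on $X$ in the obvious way and checking that a suitable subgroup descends to an edge-color-preserving action on $X/\sim$. First I would note that $GL_2(\mathbb{F}_q)$ acts on $X = (\mathbb{F}_q\times\mathbb{F}_q)\setminus\{(0,0)\}$ by $(a,b) \mapsto (a,b)A$ (row-vector convention), and that this action is transitive on $X$. Since scalar matrices $\lambda I$ act by $(a,b)\mapsto(\lambda a, \lambda b)$, and multiplication of both coordinates by $g\in S_k$ is exactly the equivalence relation $\sim$, the subgroup of matrices $A$ whose action permutes $\sim$-classes and preserves all edge colors is what we need; I would show the subgroup $H := \{A \in GL_2(\mathbb{F}_q) : \det A \in S_k\}$ works, and that $H$ already acts transitively on $X/\sim$.

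The key computation is the effect of $A \in H$ on the colored-edge relation. The edge $[a,b]\to[c,d]$ in color $i$ is governed by the quantity $bc - ad$, which is $\det\begin{pmatrix} a & b \\ c & d\end{pmatrix}$. If $A\in GL_2(\mathbb{F}_q)$, then $\begin{pmatrix} (a,b)A \\ (c,d)A \end{pmatrix} = \begin{pmatrix} a & b \\ c & d\end{pmatrix} A$, so the new governing quantity is $\det\!\left(\begin{pmatrix} a & b \\ c & d\end{pmatrix}\right)\det A = (bc-ad)\det A$. Hence if $\det A \in S_k$, then $(bc-ad)\det A \in S_{k,i}$ iff $bc-ad\in S_{k,i}$ (using $gS_{k,i} = S_{k,i}$ for $g\in S_k$, already observed in the text), so the edge colors — including the two-way color-$0$ case, which corresponds to $bc - ad = 0$ — are preserved. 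I would also check well-definedness on classes: replacing $(a,b)$ by $(ga,gb)$ with $g\in S_k$ scales $(a,b)A$ by $g$, so the induced map on $X/\sim$ is well-defined, and it is a bijection because $A$ is invertible (with $\det A^{-1} = (\det A)^{-1} \in S_k$ as $S_k$ is a subgroup). Thus every $A\in H$ gives an automorphism of $M_k(q)$.

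It remains to see that $H$ is transitive on $X/\sim$. Given $[a,b]$ and $[c,d]$, first use transitivity of $GL_2(\mathbb{F}_q)$ on $X$ to find $B$ with $(a,b)B = (c,d)$; if $\det B \notin S_k$, correct it by composing with a matrix that fixes the class $[c,d]$ but adjusts the determinant — for instance $\mathrm{diag}(\delta, 1)$ for a suitable $\delta$, combined with using a different preimage $(gc,gd)$ of $[c,d]$, gives enough freedom to land the determinant in $S_k$. (Concretely: the stabilizer in $GL_2$ of the line through $(c,d)$ surjects onto $\mathbb{F}_q^\ast$ via $\det$, so one can always fix up the determinant modulo $S_k$.) Putting these together yields an element of $H$ sending $[a,b]$ to $[c,d]$, proving vertex transitivity. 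The only step requiring genuine care is this last determinant-correction argument — making precise that $H$ still acts transitively on classes rather than just that $GL_2$ acts transitively on $X$ — but it is routine linear algebra over $\mathbb{F}_q$.
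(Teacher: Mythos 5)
Your proposal is correct, but it takes a more structural route than the paper. The paper works with just two explicit families of maps, $\rho_s:[a,b]\mapsto[a,b+as]$ and $\sigma_s:[a,b]\mapsto[a+bs,b]$ --- in your language, right multiplication by the unipotent matrices $\bigl(\begin{smallmatrix}1&s\\0&1\end{smallmatrix}\bigr)$ and $\bigl(\begin{smallmatrix}1&0\\s&1\end{smallmatrix}\bigr)$, which have determinant $1$ and hence lie in your subgroup $H$ --- and then sends $[a,b]$ to $[c,d]$ by an explicit two-step composition, treating the degenerate cases $b=0$ and $c=0$ separately. You instead identify the full color-preserving subgroup $H=\{A\in GL_2(\mathbb{F}_q):\det A\in S_k\}$ via the observation that the governing quantity $bc-ad$ is, up to sign, a $2\times 2$ determinant (note $\det\bigl(\begin{smallmatrix}a&b\\c&d\end{smallmatrix}\bigr)=ad-bc$, a harmless slip since all that matters is that the quantity gets multiplied by $\det A$), and you then get transitivity on $X/\sim$ from transitivity of $GL_2(\mathbb{F}_q)$ on $X$ together with the fact that the stabilizer of a nonzero vector surjects onto $\mathbb{F}_q^{\ast}$ under $\det$; in fact you could shortcut the determinant-correction step entirely by noting that $SL_2(\mathbb{F}_q)\subseteq H$ is already transitive on $X$. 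Your approach buys more than the statement requires: it exhibits a large explicit automorphism group of $M_k(q)$, and the color-preservation check is done once, uniformly, rather than being left as ``easy to show'' for each family of maps. The paper's argument is more elementary and self-contained, needing no facts about $GL_2$ orbits or stabilizers. Both are complete; the only loose end in yours is the determinant-correction argument, and your parenthetical (the stabilizer surjects onto $\mathbb{F}_q^{\ast}$ via $\det$) already settles it, so it is indeed routine.
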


\begin{proof}
For $s \in \mathbb{F}_q$, define the maps $\rho_s$ and $\sigma_s$ on $X/\sim$ by
\begin{align*}
\rho_s&: [a,b] \to [a, b+as] \\
\sigma_s&: [a,b] \to [a+bs,b].
\end{align*}
It is easy to show that both $\rho_s$ and $\sigma_s$ are well-defined automorphisms of $M_k(q)$.
Let $[a,b]$ and $[c,d]$ be distinct vertices of $M_k(q)$. 
Assume first that $b,c \neq 0$ and let $s_1, s_2 \in \mathbb{F}_q$ satisfy 
$a+b s_1 = c$
and
$b+c s_2 = d$.
Then $\rho_{s_2} ( \sigma_{s_1} [a,b]) =[c,d]$.
If $b=0$ then $a \neq 0$, and we can first apply $\rho_1[a,0]=[a,a]$ and then proceed as before.
If $c=0$ then $d \neq 0$, and we can proceed as before to get to $[d,d]$. Then we apply $\sigma_{-1}[d,d] = [0,d]$.
\end{proof}

\begin{prop}\label{prop_Gamma}
For $0 \leq i \leq \frac{k}{2}$, let $\Gamma_i$ be the subgraph of $M_k(q)$, with vertex set $X/\sim$, induced by the color $i$ edges of $M_k(q)$.
\begin{enumerate}[ (1) ]
\item
$\Gamma_0$ is the disjoint union of $q+1$ color $0$ cliques of order $k$.
\item
$\Gamma_1, \Gamma_2, \ldots, \Gamma_{\frac{k}{2}}$ are pairwise isomorphic.
\end{enumerate} 
\end{prop}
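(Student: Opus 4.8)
The plan is to analyze each color class separately using the algebraic description $[a,b]\to[c,d]$ in color $i$ iff $bc-ad\in S_{k,i}$, and to exploit the automorphisms $\rho_s,\sigma_s$ from Proposition~\ref{prop_VertTrans} together with the $\mathbb{F}_q^\ast$-scaling already built into the construction.

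For part (1), I would first identify when two vertices are joined by a color $0$ edge: $[a,b]\to[c,d]$ in color $0$ iff $bc-ad=0$, i.e. iff $(a,b)$ and $(c,d)$ are $\mathbb{F}_q$-linearly dependent. Since $[a,b]=[c,d]$ already means $(c,d)=g(a,b)$ for some $g\in S_k$, the color $0$ neighbors of $[a,b]$ are exactly the classes $[\lambda a,\lambda b]$ for $\lambda\in\mathbb{F}_q^\ast\setminus S_k$ (and the relation is symmetric since $bc-ad=0\iff cb-da=0$, which is why these appear as $\stackrel{0}{\longleftrightarrow}$). The set $\{[\lambda a,\lambda b]:\lambda\in\mathbb{F}_q^\ast\}$ has exactly $|\mathbb{F}_q^\ast|/|S_k| = k$ elements and forms a clique in color $0$ (any two of its members are linearly dependent). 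These "projective point" classes partition $X/\sim$ into $q+1$ blocks of size $k$, and there are no color $0$ edges between distinct blocks. This gives the disjoint union of $q+1$ color $0$ cliques of order $k$, proving (1).

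For part (2), I would show $\Gamma_j\cong\Gamma_1$ for each $2\le j\le \frac{k}{2}$ by producing an explicit graph isomorphism. The natural candidate is the scaling map $\phi_j:[a,b]\mapsto[\omega^{\,j-1}a,\,b]$ (or $[a,\omega^{j-1}b]$ — I'd check which one works). The key computation: if $[a,b]\to[c,d]$ in color $i$, so $bc-ad\in S_{k,i}=\omega^{i-1}S_k$, then applying $\phi_j$ to both endpoints changes the "discriminant" $bc-ad$ by a fixed power of $\omega$, sending $S_{k,i}$ to $S_{k,i'}$ for a shifted index $i'$; choosing the shift right maps color $1$ edges to color $j$ edges bijectively. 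One must verify that $\phi_j$ is well-defined on equivalence classes (automatic since it commutes with $S_k$-scaling up to an element of $S_k$... here care is needed because $\omega^{j-1}$ need not lie in $S_k$, so I'd instead define $\phi_j$ on representatives and check the class is independent of representative, which holds since $(ag,bg)\mapsto(\omega^{j-1}ag,bg)$ and $(\omega^{j-1}a,b)\cdot g$ agree). Then $\phi_j$ is a bijection on vertices whose inverse is $\phi$ with $\omega^{-(j-1)}$, and it preserves adjacency within color $1$, hence restricts to an isomorphism $\Gamma_1\to\Gamma_j$.

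The main obstacle I anticipate is bookkeeping in part (2): making sure the index shift induced by the scaling map lands exactly on the intended color class and is a bijection on the relevant $S_{k,i}$'s, given the interplay between the factor $k$, the subgroup $S_k$, and the coset representatives $\omega^{i-1}$. In particular one has to confirm that the scaling does not accidentally mix a color $i$ edge with a color $0$ edge or with a reversed edge (i.e. it respects the orientation/antisymmetry structure), which comes down to checking $\omega^{j-1}(bc-ad)$ stays in one of the $S_{k,i}$ (never in $-S_{k,i}$ or $\{0\}$) — and this is guaranteed because multiplying a nonzero element by $\omega^{j-1}$ with $1\le j-1\le \frac k2-1 < \frac k2$ shifts the coset index by less than $\frac k2$, staying within the "positive" cosets. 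Once that is pinned down, the rest is routine.
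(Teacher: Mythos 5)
Your proposal is correct. Part (2) is essentially the paper's argument: the paper uses the single map $[a,b]\mapsto[\omega a,b]$ to get $\Gamma_i\cong\Gamma_{i+1}$ and chains these, while you use the iterated scaling $[a,b]\mapsto[\omega^{j-1}a,b]$ directly; the well-definedness check and the computation $b(\omega^{j-1}c)-(\omega^{j-1}a)d=\omega^{j-1}(bc-ad)$, which carries $S_{k,1}$ to $S_{k,j}$ exactly (and back under the inverse), are precisely what is needed, and your worry about landing in $-S_{k,i}$ or $\{0\}$ dissolves because $\omega^{j-1}S_k=S_{k,j}$ is by definition one of the named color cosets for $1\le j\le\frac{k}{2}$. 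Part (1) is where you take a genuinely different, and arguably cleaner, route: you characterize color $0$ adjacency globally as linear dependence of representatives ($bc-ad=0$), so the color $0$ components are the fibers of the natural map to the projective line --- each line through the origin minus the origin contributes $(q-1)/|S_k|=k$ classes forming a color $0$ clique, the $q+1$ lines partition $X/\sim$, and independence across distinct lines rules out cross edges. The paper instead computes $\N_0([0,1])=\{[0,\omega^j]\mid j=1,\ldots,k-1\}$ locally and invokes vertex transitivity (Proposition~\ref{prop_VertTrans}) to propagate the clique structure and then counts. Your version avoids any appeal to vertex transitivity and makes the block structure conceptually transparent; the paper's version is shorter given that transitivity has already been established. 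Both are complete proofs of the statement.
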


\begin{proof}
(1) The neighbors of [0,1] in $\Gamma_0$ are $\N_0([0,1]) = \{[0, \omega^j] \mid j=1,2,\ldots, k-1\}$.
All elements of $\N_0([0,1])$ are neighbors of each other in $\Gamma_0$ and, thus, $[0,1]$ and its neighbors form a clique of order $k$.
As $M_k(q)$ is vertex transitive, every vertex belongs to such a clique.
And, as the elements of $\N_0([0,1])$ are not neighbors of any other vertices in $\Gamma_0$, all such cliques are disjoint.
Therefore, there must be $\frac{n}{k}=q+1$ of them.
(2) $\Gamma_i$ is isomorphic to $\Gamma_{i+1}$, for all $1 \leq i \leq \frac{k}{2}-1$, via the map $[a,b] \to [wa,b]$.
\end{proof}

\begin{prop}\label{prop_Nxv}
Let $v \in V(M_k(q))$.
Let $x \in \N_0(v)$. Then for any $i \in \{1,2, \ldots, \frac{k}{2} \}$,
$$\ON_i(x) \cap \ON_i(v) = \IN_i(x) \cap \IN_i(v) =  \emptyset.$$
\end{prop}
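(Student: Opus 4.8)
The plan is to use vertex-transitivity to put $v$ in a convenient position and then finish with a one-line discrete-logarithm computation in $\mathbb{F}_q^{\ast}$. Since the automorphisms $\rho_s,\sigma_s$ of Proposition~\ref{prop_VertTrans} preserve edge colors in both directions, both the hypothesis $x\in\N_0(v)$ and the conclusion are invariant under $\mathrm{Aut}(M_k(q))$; combined with Proposition~\ref{prop_VertTrans} this reduces the statement to the case $v=[0,1]$. For this choice, a vertex $[c,d]$ lies in $\ON_0([0,1])$ exactly when $1\cdot c-0\cdot d=c\in S_{k,0}=\{0\}$, and any such out-neighbor is automatically a neighbor, so, as in the proof of Proposition~\ref{prop_Gamma}, $\N_0([0,1])=\{[0,\omega^{j}]\mid 1\le j\le k-1\}$. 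Thus I may assume $x=[0,\omega^{j}]$ with $1\le j\le k-1$.

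Next I would write out the two color-$i$ out-adjacency conditions for a hypothetical common out-neighbor $y=[c,d]$. The edge $v\to y$ has color $i$ iff $c\in S_{k,i}$, and the edge $x\to y$ has color $i$ iff $\omega^{j}c\in S_{k,i}$; in particular $c\neq 0$, since $S_{k,i}\subseteq\mathbb{F}_q^{\ast}$ for $i\geq 1$. Writing $c=\omega^{m}$ and using that $S_{k,i}=\omega^{i-1}S_k$ with $S_k=\langle\omega^{k}\rangle$ — so that membership in $S_{k,i}$ depends only on the exponent modulo $k$ — these two conditions become $m\equiv i-1\pmod k$ and $m+j\equiv i-1\pmod k$. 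Subtracting forces $j\equiv 0\pmod k$, contradicting $1\le j\le k-1$, so no such $y$ exists and $\ON_i(x)\cap\ON_i(v)=\emptyset$.

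The in-neighbor identity is the mirror image: for $y=[c,d]$ the edge $y\to v$ has color $i$ iff $-c\in S_{k,i}$ and $y\to x$ has color $i$ iff $-\omega^{j}c\in S_{k,i}$, and the same ``exponent mod $k$'' argument again yields $j\equiv 0\pmod k$, a contradiction. I do not anticipate any genuine obstacle here; the only point that deserves an explicit word is the reduction to $v=[0,1]$, which is legitimate precisely because an automorphism of an edge-colored digraph carries color-$0$ neighbors to color-$0$ neighbors and common color-$i$ out-neighbors (resp.\ in-neighbors) to common color-$i$ out-neighbors (resp.\ in-neighbors).
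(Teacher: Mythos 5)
Your proposal is correct and follows essentially the same route as the paper: reduce to $v=[0,1]$ by vertex transitivity, identify $x=[0,\omega^{j}]$ with $1\le j\le k-1$, and compare exponents modulo $k$ to rule out a common color-$i$ out-neighbor (resp.\ in-neighbor). The only cosmetic difference is that the paper writes out $\ON_i(v)$, $\ON_i(x)$, $\IN_i(v)$, $\IN_i(x)$ explicitly as sets and observes they are disjoint, whereas you argue by contradiction from a hypothetical common neighbor; the underlying computation is identical.
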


\begin{proof}
As $M_k(q)$ is vertex transitive, it suffices to prove for $v=[0,1]$.
Then, let $x \in \N_0([0,1])$, i.e., $x=[0, w^j]$ for some $j=1,2,\ldots, k-1$.
Now
\begin{equation*}
[0, \omega^j] \stackrel{i}{\to} [c,d]
\Longleftrightarrow
\omega^j c \in S_{k,i}
\Longleftrightarrow
c \in \{ \omega^{kl+i-j-1} \mid l=0,1,\ldots, \tfrac{q-1}{k}-1\},
\end{equation*}
and so
$$\ON_i(x) = \ON_i([0, \omega^j]) = \{ [ \omega^{i-j-1 \imod{k}}, d] \mid d \in \mathbb{F}_q \}.$$
Also,
$$\ON_i(v) = \ON_i([0, 1]) = \{ [ \omega^{i-1}, d] \mid d \in \mathbb{F}_q \}.$$
As $j \not\equiv 0 \imod{k}$, we get that $\ON_i(x) \cap \ON_i(v) = \emptyset$.
Similar arguments produce
$$\IN_i(x) = \IN_i([0, \omega^j]) = \{ [ \omega^{i-j-1+\frac{k}{2} \imod{k}}, b] \mid b \in \mathbb{F}_q \}$$
and
$$\IN_i(v) = \IN_i([0, 1]) = \{ [ \omega^{i-1+\frac{k}{2}}, b] \mid b \in \mathbb{F}_q \}.$$
So, $\IN_i(x) \cap \IN_i(v) = \emptyset$.
\end{proof}


\section{Relation to the $k$-th power Paley digraphs}\label{sec_Paley}
Recall from Section \ref{sec_Con}, $k \geq 2$ is an even integer and $q$ is a prime power such that $q \equiv k+1 \imod {2k}$. $S_k$ is the subgroup of $\mathbb{F}_q^{\ast}$ containing the $k$-th power residues, i.e., if $\omega$ is a primitive element of $\mathbb{F}_q$, then $S_k = \langle \omega^k \rangle$, and $S_{k,i} := \omega^{i-1} S_k$, for $1 \leq i \leq \tfrac{k}{2}$.

We now recall some definitions and properties from \cite{MS} concerning Paley digraphs. We define the \emph{k-th power Paley digraph} of order $q$, $G_k(q)$, as the graph with vertex set $\mathbb{F}_q$ where $a \to b$ is an edge if and only if $b-a \in S_k$. We note that $-1 \notin S_k$ so $G_k(q)$ is a well-defined oriented graph.
For each $1 \leq i \leq \frac{k}{2}$, we define the related directed graph $G_{k,i}(q)$ with vertex set $\mathbb{F}_q$ where $a \to b$ is an edge if and only if $b-a \in S_{k,i}$. Each $G_{k,i}(q)$ is isomorphic to $G_{k,1}(q)=G_k(q)$, the $k$-th power Paley digraph, via the map $f_i:V(G_k(q)) \to V(G_{k,i}(q))$ given by $f_i(a)=\omega^{i-1} a$. Now consider the \emph{multicolor $k$-th power Paley tournament} $P_k(q)$ whose vertex set is $\mathbb{F}_q$ and whose edges are colored in $\frac{k}{2}$ colors according to $a \to b$ has color $i$ if $b-a \in S_{k,i}$. Note that the induced subgraph of color $i$ of $P_k(q)$ is $G_{k,i}(q)$. Thus, $P_k(q)$ contains a monochromatic transitive subtournament of order $m$ if and only if $G_k(q)$ contains a transitive subtournament of order $m$.

\begin{prop}\label{prop_ONPaley}
Let $i \in \{1,2, \ldots, \frac{k}{2} \}$. 
Let $v \in V(M_k(q))$.
Then the induced subgraph of $M_k(q)$ with vertex set $\ON_i(v)$ is isomorphic to $P_k(q)$.
\end{prop}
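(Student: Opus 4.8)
The plan is to use the vertex transitivity of $M_k(q)$ to reduce to a single convenient base vertex, make that vertex's color-$i$ out-neighborhood completely explicit, and then write down an affine bijection onto $\mathbb{F}_q = V(P_k(q))$ whose color-preserving property reduces to a one-line computation in $\mathbb{F}_q$.

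By Proposition~\ref{prop_VertTrans} it suffices to treat $v = [0,1]$. The proof of Proposition~\ref{prop_Nxv} already records that
\[
\ON_i([0,1]) = \{ [\omega^{i-1}, d] \mid d \in \mathbb{F}_q \},
\]
and $[\omega^{i-1},d] = [\omega^{i-1},d']$ forces the relating scalar $g \in S_k$ to equal $1$, hence $d = d'$; so $d \mapsto [\omega^{i-1},d]$ is a bijection $\mathbb{F}_q \to \ON_i([0,1])$. Precomposing it with the bijection $d \mapsto -\omega^{1-i}d$ of $\mathbb{F}_q$, I would define $\phi \colon V(P_k(q)) = \mathbb{F}_q \to \ON_i([0,1])$ by
\[
\phi(d) := [\omega^{i-1},\, -\omega^{1-i}d],
\]
which is then a bijection of vertex sets.

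It remains to check that $\phi$ preserves edge colors (hence directions). For distinct $d, d' \in \mathbb{F}_q$, by the definition of the coloring the edge of $M_k(q)$ from $\phi(d)$ to $\phi(d')$ has color $j$ if and only if $(-\omega^{1-i}d)(\omega^{i-1}) - (\omega^{i-1})(-\omega^{1-i}d') \in S_{k,j}$, and the left-hand side simplifies to $d' - d$. Hence $\phi(d) \stackrel{j}{\to} \phi(d')$ in $M_k(q)$ exactly when $d' - d \in S_{k,j}$, which is precisely the condition that $d \to d'$ be a color-$j$ edge of $P_k(q)$. Since $d' - d \ne 0$, no doubled color-$0$ edge ever arises, consistent with $P_k(q)$ being a tournament. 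Therefore $\phi$ is an isomorphism of edge-colored digraphs, as required.

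The only genuinely non-formal point is the choice of the scalar $-\omega^{1-i}$. A naive bijection such as $d \mapsto [\omega^{i-1},d]$ realizes $\ON_i(v)$ only as a ``twisted'' copy of $P_k(q)$: the relevant skew-symmetric pairing is $(d,d') \mapsto \omega^{i-1}(d-d')$, which both rescales by the factor $\omega^{i-1}$ (permuting the colors) and, via $d-d' = -(d'-d)$, introduces the element $-1$, which is not a $k$-th power. Multiplying the second coordinate by $-\omega^{1-i}$ is exactly what collapses this pairing to $d'-d$, and I expect that identifying this constant, rather than any of the subsequent bookkeeping, is where the real care is needed.
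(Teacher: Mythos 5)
Your proposal is correct and follows essentially the same route as the paper: reduce to $v=[0,1]$ by vertex transitivity, use the explicit description $\ON_i([0,1])=\{[\omega^{i-1},d] \mid d\in\mathbb{F}_q\}$ from Proposition~\ref{prop_Nxv}, and exhibit a color-preserving bijection — your map $d\mapsto[\omega^{i-1},-\omega^{1-i}d]$ is precisely the inverse of the paper's map $[\omega^{i-1},d]\mapsto-\omega^{i-1}d$, and the edge-color computation is the same.
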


\begin{proof}
As $M_k(q)$ is vertex transitive, it suffices to prove for $v=[0,1]$.
Let $H$ denote the induced subgraph of $M_k(q)$ with vertex set $\ON_i([0,1])$.
In the proof of Proposition \ref{prop_Nxv} we saw that $\ON_i([0, 1]) = \{ [ \omega^{i-1}, d] \mid d \in \mathbb{F}_q \}.$
So $\#V(H) = | \ON_i([0, 1]) |= q = \#V(P_k(q))$.
Now consider the bijective map $\phi: V(H) \to V(P_k(q))$ given by $\phi([ \omega^{i-1}, d]) = -\omega^{i-1}d$.
It remans to show that $\phi$ is color-preserving. 
Let $[ \omega^{i-1}, d_1] \in V(H)$ and let $[ \omega^{i-1}, d_2] \in \ON_s([ \omega^{i-1}, d_1])]$  for some $s \in \{1,2, \ldots, \frac{k}{2} \}$ (note that $s \neq 0$ otherwise $d_1=d_2$).
Now,
\begin{align*}
[ \omega^{i-1}, d_1] \stackrel{s}{\to} [ \omega^{i-1}, d_2]
&\Longleftrightarrow
d_1 \omega^{i-1} - \omega^{i-1} d_2 \in S_{k,s}\\
&\Longleftrightarrow
\phi([ \omega^{i-1}, d_2]) - \phi([ \omega^{i-1}, d_1]) \in S_{k,s}\\
&\Longleftrightarrow
\phi([ \omega^{i-1}, d_1]) \stackrel{s}{\to} \phi([ \omega^{i-1}, d_2]),
\end{align*}
as required.
\end{proof}

Recall that any pair of vertices of $M_k(q)$ will either be connected by a single oriented edge in color $i$, for some $1 \leq i \leq \frac{k}{2}$, or, connected by two edges of color $0$ oriented in opposite directions. We now replace all these pairs of color $0$ edges with a single oriented edge of color $1\leq i \leq \frac{k}{2}$, where the new color and orientation are randomly assigned. 
We call this altered graph $M_k^{\ast}(q)$, which is a tournament whose edges are colored in $\frac{k}{2}$ colors.

\begin{theorem}\label{thm_Main}
Let $k \geq 2$ be an even integer and $q$ be a prime power such that $q \equiv k+1 \imod {2k}$. 
Let $m \geq k$. 
If $P_k(q)$ contains no monochromatic transitive subtournament of order $m$, then $M_k^{\ast}(q)$ contains no monochromatic transitive subtournament of order $m+2$.
\end{theorem}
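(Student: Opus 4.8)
The plan is to argue by contradiction: suppose $M_k^{\ast}(q)$ contains a monochromatic transitive subtournament $T$ of order $m+2$, say in color $i$, and extract from it a transitive subtournament of order $m$ inside $P_k(q)$. Write $T = (a_1, a_2, \ldots, a_{m+2})$ with out-degrees $(m+1, m, \ldots, 1, 0)$ within $T$, so that $a_1$ beats everyone in $T$ and $a_{m+2}$ loses to everyone in $T$, all via color $i$ edges of $M_k^{\ast}(q)$. The key point is that $M_k^{\ast}(q)$ differs from $M_k(q)$ only on the pairs joined by a double color-$0$ edge, and such pairs always lie in a common color-$0$ clique by Proposition~\ref{prop_Gamma}(1) — these cliques have order $k$. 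So within the vertex set $\{a_2, a_3, \ldots, a_{m+1}\}$ (the ``middle'' $m$ vertices of $T$), I want to show that most pairs are genuine color-$i$ edges of $M_k(q)$, not recolored ones.

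The central step is a counting/pigeonhole argument on the color-$0$ cliques. The edge $a_1 \stackrel{i}{\to} a_2$ in $M_k^{\ast}(q)$ might be a recolored color-$0$ edge, meaning $a_1$ and $a_2$ share a color-$0$ clique; similarly for $a_2 \stackrel{i}{\to} a_3$, etc. Here is where I would invoke Proposition~\ref{prop_Nxv}: if $x \in \N_0(v)$ then $\ON_i(x) \cap \ON_i(v) = \IN_i(x) \cap \IN_i(v) = \emptyset$ for every $i \geq 1$. The consequence I want is that a color-$0$ clique $C$ can contain only very few vertices of the ``true color-$i$'' out-neighborhood or in-neighborhood of any fixed vertex — in fact, reasoning with Proposition~\ref{prop_Nxv}, if $v$ and two distinct $x, x' \in C \cap \ON_i(v)$... wait, that can't happen since $x \in \N_0(v)$ would force $\ON_i(x) \cap \ON_i(v) = \emptyset$ yet we need membership the other way. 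Let me phrase it as: for a fixed vertex $v$ and a color-$0$ clique $C$ not containing $v$, Proposition~\ref{prop_Nxv} shows that if some $x \in C$ lies in $\N_0(v)$, then no element of $C \cap \N_0(v)$... the upshot is that $C$ meets $\ON_i(v)$ in at most one vertex and $\IN_i(v)$ in at most one vertex, because any two vertices of $C$ are mutual color-$0$ neighbors. So along the chain $a_1, a_2, \ldots$, consecutive recolored edges are forced to cluster: the set of indices $j \in \{1, \ldots, m+1\}$ for which $a_j \stackrel{i}{\to} a_{j+1}$ is a recolored edge, together with the structure of the cliques, lets me delete at most $2$ vertices from $T$ and have all remaining pairs be true color-$i$ edges of $M_k(q)$. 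After deleting those (at most two) bad vertices, the surviving $m$ vertices form a transitive subtournament of order $m$ in $\Gamma_i$, hence in $G_{k,i}(q)$, hence $P_k(q)$ contains a monochromatic transitive subtournament of order $m$ — contradiction. The bound $m \geq k$ presumably enters to guarantee that the transitive tournament is large enough that it cannot be swallowed by a single clique of order $k$, so the ``delete at most two'' claim has teeth.

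I would organize the write-up as: (i) set up the contradiction and notation for $T$; (ii) recall that recolored edges correspond exactly to pairs inside a common color-$0$ clique of order $k$, via Proposition~\ref{prop_Gamma}(1); (iii) prove the ``at most one'' lemma — a color-$0$ clique meets $\ON_i(v)$ and $\IN_i(v)$ each in at most one vertex — as a direct corollary of Proposition~\ref{prop_Nxv} (two vertices of the clique are color-$0$ neighbors, contradicting the emptiness of the relevant intersection); (iv) use this to show the set of ``bad'' vertices of $T$ spans at most two of the $a_j$'s near the extremes, using transitivity of $T$ to propagate; (v) delete them and apply Proposition~\ref{prop_ONPaley} or the isomorphism $\Gamma_i \cong G_{k,i}(q)$ to land in $P_k(q)$.

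The main obstacle I anticipate is step (iv): pinning down precisely why only $2$ vertices need to be removed, and not more. The naive worry is that recolored edges could be scattered throughout the chain $a_1 \to a_2 \to \cdots \to a_{m+2}$, each requiring a deletion. The resolution must be that if $a_j$ and $a_{j+1}$ share a color-$0$ clique $C$, then by transitivity $a_j$ beats all of $a_{j+2}, \ldots, a_{m+2}$ and $a_{j+1}$ also beats all of them, so $a_{j+2}, \ldots, a_{m+2}$ all lie in $\ON_i(a_j) \cap \ON_i(a_{j+1})$-ish configurations relative to $C$; combined with the ``at most one'' lemma this forces $a_{j+1}$ essentially to be the unique clique-vertex, and any other recolored edge must involve the same clique $C$ and be adjacent in the ordering — so all recolored edges of $T$ are confined to at most one clique, contributing at most... here I'd need the order-$k$ bound and $m \geq k$ to conclude that at most two vertices of $T$ lie in that clique after accounting for orientation, which is exactly the ``$m+2 \to m$'' drop. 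Making this rigorous — rather than merely plausible — is the crux of the argument and where I would spend the most care, likely by analyzing the color-$0$ clique containing $a_1$ (or $a_{m+2}$) and showing all the recoloring happens there.
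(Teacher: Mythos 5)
Your overall skeleton (argue on a largest monochromatic transitive subtournament, use Proposition \ref{prop_Nxv} to control recolored edges, use Proposition \ref{prop_Gamma}(1) with $m\geq k$ for the all-color-$0$ case, and reach $P_k(q)$ at the end) is close in spirit to the paper's proof, and your ``a color-$0$ clique meets $\ON_i(v)$ and $\IN_i(v)$ in at most one vertex'' lemma is correct and is exactly how the paper uses Proposition \ref{prop_Nxv}. But there are two genuine gaps. The first is in your endgame, and it is an off-by-one that kills the contradiction: a transitive subtournament of $M_k(q)$ all of whose edges are genuine color-$i$ edges is \emph{not} thereby a subtournament of $P_k(q)$. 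The step ``in $\Gamma_i$, hence in $G_{k,i}(q)$'' is false --- $\Gamma_i$ has $k(q+1)$ vertices and is not isomorphic to $G_{k,i}(q)$. The only bridge to $P_k(q)$ in the paper is Proposition \ref{prop_ONPaley}, which identifies the induced subgraph on $\ON_i(v)$ with $P_k(q)$; to invoke it you must discard the apex of your genuine color-$i$ transitive tournament. So with your budget of ``delete at most two bad vertices'' from the order-$(m+2)$ tournament you obtain $m$ genuine color-$i$ vertices, but only $m-1$ of them land inside $P_k(q)$ --- no contradiction with the hypothesis. To make the numbers work you must show that (outside the all-color-$0$ case) at most \emph{one} vertex needs to be deleted; then $m+1$ genuine vertices remain, and dropping the apex puts $m$ of them into $\ON_i(a_1)\cong P_k(q)$, which is the contradiction.

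The second gap is the confinement step (iv) itself, which you explicitly flag as the crux but do not supply, and which your clique lemma alone does not yield. The paper closes it as follows. If $a_1\stackrel{0}{\longleftrightarrow}a_2$ in $M_k(q)$, a triangle analysis shows every further vertex must also be joined to $a_1,a_2$ by color-$0$ double edges, so the whole tournament lies in one color-$0$ clique and has order at most $k\leq m$ (this is where $m\geq k$ enters, as you guessed). If instead $a_1\stackrel{i}{\to}a_2$ is genuine, an explicit computation (with $a_1=[0,1]$, $a_2=[\omega^{i-1},d]$) shows that the only element of $\N_0(a_1)$ that can receive a color-$i$ edge from $a_2$ is $[0,-1]$; hence at most one vertex $a_s$ of the tournament is color-$0$-joined to $a_1$. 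Deleting $a_s$, every remaining $a_t$ is a genuine color-$i$ out-neighbor of $a_1$, and then Proposition \ref{prop_Nxv} (two color-$0$ neighbors cannot share the color-$i$ in-neighbor $a_1$) forces all edges among the remaining vertices to be genuine color $i$. Only then does the count $l-2<m$, i.e.\ $l\leq m+1$, go through. Without this uniqueness computation, and with the corrected accounting through $\ON_i(a_1)$, your argument as written does not close.
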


\begin{proof}
Let $T_l^{\ast}$ be a monochromatic, in color $i$, $1 \leq i \leq \frac{k}{2}$, transitive subtournament of $M_k^{\ast}(q)$ of order $l$. 
We represent $T_l^{\ast}$ by the $l$-tuple of its vertices $(a_1, a_2, \ldots, a_l)$ with the corresponding $l$-tuple of out-degrees $(l-1, l-2, \ldots, 1, 0)$.
Let $T_l$ be the corresponding subgraph of $M_k(q)$ before the color $0$ edges were reassigned, i.e., $T_l$ also has vertices $a_1, a_2, \ldots, a_l$ but some vertices may be connected by two edges of color $0$ oriented in opposite directions. 

Assume $a_1 \stackrel{0}{\longleftrightarrow} a_2$ in $M_k(q)$. 
If $l \geq 2$, consider $a_t$ for $3 \leq t \leq l$.
Then there are four possibilities for the triangle $(a_1, a_2, a_t)$ in $M_k(q)$:\\

\begin{tikzpicture}
\node[inner sep=2pt, circle] (A) at (-2,0) {$a_1$};
\node[inner sep=2pt, circle] (B) at (0,0) {$a_2$}; 
\node[inner sep=2pt, circle] (C) at (-1,-1.5) {$a_t$}; 

\draw[thick, mid arrow] (A.east) to (B.west);
\draw[thick, mid arrow] (B.west) to (A.east);
\draw[thick, mid arrow] (A.south east) to (C.north west);
\draw[thick, mid arrow] (B.south west) to (C.north east);

\node at (-1,0.25) {\small{$0$}};
\node at (-1.75,-0.75) {\small{$i$}};
\node at (-0.25,-0.75) {\small{$i$}};
\node at (-1.0,-2.25) {\small{(1)}};


\node[inner sep=2pt, circle] (A) at (2,0) {$a_1$};
\node[inner sep=2pt, circle] (B) at (4,0) {$a_2$}; 
\node[inner sep=2pt, circle] (C) at (3,-1.5) {$a_t$}; 

\draw[thick, mid arrow] (A.east) to (B.west);
\draw[thick, mid arrow] (B.west) to (A.east);
\draw[thick, mid arrow] (A.south east) to (C.north west);
\draw[thick, mid arrow] (B.south west) to (C.north east);
\draw[thick, mid arrow] (C.north east) to (B.south west);

\node at (3,0.25) {\small{$0$}};
\node at (2.25,-0.75) {\small{$i$}};
\node at (3.75,-0.75) {\small{$0$}};
\node at (3,-2.25) {\small{(2)}};


\node[inner sep=2pt, circle] (A) at (6,0) {$a_1$};
\node[inner sep=2pt, circle] (B) at (8,0) {$a_2$}; 
\node[inner sep=2pt, circle] (C) at (7,-1.5) {$a_t$}; 

\draw[thick, mid arrow] (A.east) to (B.west);
\draw[thick, mid arrow] (B.west) to (A.east);
\draw[thick, mid arrow] (A.south east) to (C.north west);
\draw[thick, mid arrow] (C.north west) to (A.south east);
\draw[thick, mid arrow] (B.south west) to (C.north east);

\node at (7,0.25) {\small{$0$}};
\node at (6.25,-0.75) {\small{$0$}};
\node at (7.75,-0.75) {\small{$i$}};
\node at (7,-2.25) {\small{(3)}};


\node[inner sep=2pt, circle] (A) at (10,0) {$a_1$};
\node[inner sep=2pt, circle] (B) at (12,0) {$a_2$}; 
\node[inner sep=2pt, circle] (C) at (11,-1.5) {$a_t$}; 

\draw[thick, mid arrow] (A.east) to (B.west);
\draw[thick, mid arrow] (B.west) to (A.east);
\draw[thick, mid arrow] (A.south east) to (C.north west);
\draw[thick, mid arrow] (C.north east) to (B.south west);
\draw[thick, mid arrow] (C.north west) to (A.south east);
\draw[thick, mid arrow] (B.south west) to (C.north east);

\node at (11,0.25) {\small{$0$}};
\node at (10.25,-0.75) {\small{$0$}};
\node at (11.75,-0.75) {\small{$0$}};
\node at (11,-2.25) {\small{(4)}};
\end{tikzpicture}
By Proposition \ref{prop_Nxv}, $\ON_i(a_1) \cap \ON_i(a_2) = \emptyset$ so case (1) can't happen.
Now consider case (2).
As $M_k(q)$ is vertex transitive, we can let $a_2 = [0,1]$, without loss of generality.
Then $a_1, a_t \in \N_0([0,1]) = \{[0, \omega^j] \mid j=1,2,\ldots, k-1\}$.
If we let $a_1 = [0, \omega^{j_1}]$ and $a_t = [0, \omega^{j_2}]$, for some $1 \leq j_1 \neq j_2 \leq k-1$, then $a_1 \stackrel{i}{\to} a_t$ implies $0 = \omega^{j_1} \cdot 0 - 0 \cdot \omega^{j_2} \in S_{k,i}$, which is a contradiction. Case (3) is isomorphic to case (2). 
So, if $a_1 \stackrel{0}{\longleftrightarrow} a_2$, then case (4) is the only possibility, which inductively implies that $T_l$ is monochromatic in color $0$.
Thus, by Proposition \ref{prop_Gamma} (1), $T_l$ must be contained in a color $0$ clique of $\Gamma_0$ and so $l \leq k \leq m$.

Now assume $a_1 \stackrel{i}{\to} a_2$ in $M_k(q)$.
If $l \geq 2$, consider $a_t$ for $3 \leq t \leq l$.
Again, we see that there are four possibilities for the triangle $(a_1, a_2, a_t)$ in $M_k(q)$:\\
\begin{tikzpicture}
\node[inner sep=2pt, circle] (A) at (-2,0) {$a_1$};
\node[inner sep=2pt, circle] (B) at (0,0) {$a_2$}; 
\node[inner sep=2pt, circle] (C) at (-1,-1.5) {$a_t$}; 

\draw[thick, mid arrow] (A.east) to (B.west);
\draw[thick, mid arrow] (A.south east) to (C.north west);
\draw[thick, mid arrow] (B.south west) to (C.north east);

\node at (-1,0.25) {\small{$i$}};
\node at (-1.75,-0.75) {\small{$i$}};
\node at (-0.25,-0.75) {\small{$i$}};
\node at (-1.0,-2.25) {\small{(i)}};


\node[inner sep=2pt, circle] (A) at (2,0) {$a_1$};
\node[inner sep=2pt, circle] (B) at (4,0) {$a_2$}; 
\node[inner sep=2pt, circle] (C) at (3,-1.5) {$a_t$}; 

\draw[thick, mid arrow] (A.east) to (B.west);
\draw[thick, mid arrow] (A.south east) to (C.north west);
\draw[thick, mid arrow] (B.south west) to (C.north east);
\draw[thick, mid arrow] (C.north east) to (B.south west);

\node at (3,0.25) {\small{$i$}};
\node at (2.25,-0.75) {\small{$i$}};
\node at (3.75,-0.75) {\small{$0$}};
\node at (3,-2.25) {\small{(ii)}};


\node[inner sep=2pt, circle] (A) at (6,0) {$a_1$};
\node[inner sep=2pt, circle] (B) at (8,0) {$a_2$}; 
\node[inner sep=2pt, circle] (C) at (7,-1.5) {$a_t$}; 

\draw[thick, mid arrow] (A.east) to (B.west);
\draw[thick, mid arrow] (A.south east) to (C.north west);
\draw[thick, mid arrow] (C.north west) to (A.south east);
\draw[thick, mid arrow] (B.south west) to (C.north east);

\node at (7,0.25) {\small{$i$}};
\node at (6.25,-0.75) {\small{$0$}};
\node at (7.75,-0.75) {\small{$i$}};
\node at (7,-2.25) {\small{(iii)}};


\node[inner sep=2pt, circle] (A) at (10,0) {$a_1$};
\node[inner sep=2pt, circle] (B) at (12,0) {$a_2$}; 
\node[inner sep=2pt, circle] (C) at (11,-1.5) {$a_t$}; 

\draw[thick, mid arrow] (A.east) to (B.west);
\draw[thick, mid arrow] (A.south east) to (C.north west);
\draw[thick, mid arrow] (C.north east) to (B.south west);
\draw[thick, mid arrow] (C.north west) to (A.south east);
\draw[thick, mid arrow] (B.south west) to (C.north east);

\node at (11,0.25) {\small{$i$}};
\node at (10.25,-0.75) {\small{$0$}};
\node at (11.75,-0.75) {\small{$0$}};
\node at (11,-2.25) {\small{(iv)}};
\end{tikzpicture}

\noindent
Case (ii) can't happen because $\IN_i(a_2) \cap \IN_i(a_t) = \emptyset$, by Proposition \ref{prop_Nxv}.
Case (iv) is isomorphic to case (2) above, which we've seen is not possible.
We now examine case (iii).
As $M_k(q)$ is vertex transitive, we can let $a_1 = [0,1]$, without loss of generality.
Then $a_2 \in \ON_i([0, 1]) = \{ [ \omega^{i-1}, d] \mid d \in \mathbb{F}_q \}$
and $a_t \in \N_0([0,1]) = \{[0, \omega^j] \mid j=1,2,\ldots, k-1\}$.
Further, 
\begin{align*}
a_2 \stackrel{i}{\to} a_t 
& \Longleftrightarrow
[\omega^{i-1}, d] \stackrel{i}{\to} [0, \omega^j] 
\\ & \Longleftrightarrow
d\cdot 0 - \omega^{i-1}\cdot \omega^j \in S_{k,i}
\\ & \Longleftrightarrow
\omega^{i+j-1} \in -S_{k,i} = \{\omega^{kv+i-1+\frac{k}{2}} \mid v=0,1,\ldots, \tfrac{q-1}{k}-1\}
\\ & \Longleftrightarrow
\omega^{j} \in \{\omega^{kv+\frac{k}{2}} \mid v=0,1,\ldots, \tfrac{q-1}{k}-1\}
\\ & \Longleftrightarrow
j=\tfrac{k}{2}
\\ & \Longleftrightarrow
a_t=[0,\omega^{\frac{k}{2}}] = [0,-1]
\end{align*}
So, case (iii) is possible but there is only one possible $a_t$, which means there is only one value of $t \in \{3, \ldots, l\}$ for which $a_1 \stackrel{0}{\longleftrightarrow}  a_t$. 
So assume there is an $s \in \{3, \ldots, l\}$ such that\\
\begin{tikzpicture}
\node[inner sep=2pt, circle] (A) at (-2,0) {$a_1$};
\node[inner sep=2pt, circle] (B) at (0,0) {$a_2$}; 
\node[inner sep=2pt, circle] (C) at (-1,-1.5) {$a_s$}; 

\draw[thick, mid arrow] (A.east) to (B.west);
\draw[thick, mid arrow] (A.south east) to (C.north west);
\draw[thick, mid arrow] (C.north west) to (A.south east);
\draw[thick, mid arrow] (B.south west) to (C.north east);

\node at (-1,0.25) {\small{$i$}};
\node at (-1.75,-0.75) {\small{$0$}};
\node at (-0.25,-0.75) {\small{$i$}};
\end{tikzpicture}

\noindent
Then $a_1 \stackrel{i}{\to} a_t$ for all $t \in \{3, \ldots, l\} \setminus \{s\}$ and by previous arguments we must have\\
\begin{tikzpicture}
\node[inner sep=2pt, circle] (A) at (-2,0) {$a_1$};
\node[inner sep=2pt, circle] (B) at (0,0) {$a_2$}; 
\node[inner sep=2pt, circle] (C) at (-1,-1.5) {$a_t$}; 

\draw[thick, mid arrow] (A.east) to (B.west);
\draw[thick, mid arrow] (A.south east) to (C.north west);
\draw[thick, mid arrow] (B.south west) to (C.north east);

\node at (-1,0.25) {\small{$i$}};
\node at (-1.75,-0.75) {\small{$i$}};
\node at (-0.25,-0.75) {\small{$i$}};
\end{tikzpicture}

\noindent
Therefore, if $t_1, t_2 \in \{3, \ldots, l\} \setminus \{s\}$ with $t_1 < t_2$, then\\
\begin{tikzpicture}
\node[inner sep=2pt, circle] (A) at (-2,0) {$a_1$};
\node[inner sep=2pt, circle] (B) at (0,0) {$a_{t_1}$}; 
\node[inner sep=2pt, circle] (C) at (-1,-1.5) {$a_{t_2}$}; 

\draw[thick, mid arrow] (A.east) to (B.west);
\draw[thick, mid arrow] (A.south east) to (C.north west);
\draw[thick, mid arrow] (B.south west) to (C.north east);
\draw[thick, mid arrow] (C.north east) to (B.south west);

\node at (-1,0.25) {\small{$i$}};
\node at (-1.75,-0.75) {\small{$i$}};
\node at (-0.25,-0.75) {\small{$0$}};
\end{tikzpicture}

\noindent
is not possible, by Proposition \ref{prop_Nxv}, and so $a_{t_1} \stackrel{i}{\to} a_{t_2}$.
Thus, if we remove $a_s$ from $T_l$ we get a monochromatic, in color $i$,  transitive subtournament of $M_k(q)$ of order $l-1$, which we call $T_{l-1}$.
Furthermore, $T_{l-1} \setminus \{a_1\}$ is a monochromatic, in color $i$,  transitive subtournament of $M_k(q)$ of order $l-2$.
If we let $H$ denote the induced subgraph of $M_k(q)$ with vertex set $\ON_i([0,1])$, then by Proposition \ref{prop_ONPaley}, $T_{l-1} \setminus \{a_1\} \subseteq H \cong P_k(q)$.
So, if $P_k(q)$ contains no monochromatic transitive subtournament of order $m$, then $l-2 < m$.

If there is no $3 \leq t \leq l$ for which $(a_1, a_2, a_t)$ satisfies cases (ii), (ii) or (iv) then all $a_t$, for $3 \leq t \leq l$, satisfy case (i).
Then $a_{t_1} \stackrel{i}{\to} a_{t_2}$ for all $3 \leq t_1 < t_2 \leq l$ by previous arguments.
So, in this case, $T_l$ itself is a monochromatic, in color $i$, transitive subtournament of $M_k(q)$.
Letting $H$ denote the induced subgraph of $M_k(q)$ with vertex set $\ON_i(a_1)$ and, again, using Proposition \ref{prop_ONPaley}, we get that $T_{l} \setminus \{a_1\} \subseteq H \cong P_k(q)$.
So, if $P_k(q)$ contains no monochromatic transitive subtournament of order $m$, then $l-1 < m$.

Overall, if $P_k(q)$ contains no monochromatic transitive subtournament of order $m$, then $M_k^{\ast}(q)$ contains no monochromatic transitive subtournament of order $m+2$.
\end{proof}

\begin{cor}\label{cor_Main}
Let $k \geq 2$ be an even integer and $q$ be a prime power such that $q \equiv k+1 \imod {2k}$.
If $\mathcal{K}_m(G_k(q))=0$, for $m\geq k$, then $R_{\frac{k}{2}}(m+2) \geq k(q+1)+1$.
\end{cor}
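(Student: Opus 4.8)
The corollary is essentially a translation of Theorem~\ref{thm_Main} into the language of Ramsey numbers, so the plan is to connect the three objects $G_k(q)$, $P_k(q)$, and $M_k^{\ast}(q)$ and then count vertices. First I would observe that the hypothesis $\mathcal{K}_m(G_k(q))=0$ says precisely that $G_k(q)$ has no transitive subtournament of order $m$. Using the discussion preceding Proposition~\ref{prop_ONPaley} — namely that each color-$i$ induced subgraph $G_{k,i}(q)$ of $P_k(q)$ is isomorphic to $G_k(q)$ via $f_i$, and that a monochromatic transitive subtournament of order $m$ in $P_k(q)$ is exactly a transitive subtournament of order $m$ in some $G_{k,i}(q)$, hence in $G_k(q)$ — I would conclude that $P_k(q)$ contains no monochromatic transitive subtournament of order $m$.

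Next I would invoke Theorem~\ref{thm_Main} directly: since $m\geq k$ and $P_k(q)$ has no monochromatic transitive subtournament of order $m$, the tournament $M_k^{\ast}(q)$ has no monochromatic transitive subtournament of order $m+2$. The key structural facts here are that $M_k^{\ast}(q)$ is a tournament whose edges are colored in $\frac{k}{2}$ colors (established when $M_k^{\ast}(q)$ was defined) and that its order is $n = k(q+1)$ (established in Section~\ref{sec_Con} when the equivalence classes were counted).

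Finally I would unwind the definition of $R_t(m)$. By definition, $R_{k/2}(m+2)$ is the least positive integer $n$ such that \emph{every} tournament on $n$ vertices whose edges are colored in $\frac{k}{2}$ colors contains a monochromatic transitive subtournament of order $m+2$. Since $M_k^{\ast}(q)$ is a $\frac{k}{2}$-colored tournament on $k(q+1)$ vertices with no such subtournament, the value $k(q+1)$ is \emph{not} large enough to force one, whence $R_{k/2}(m+2) > k(q+1)$, i.e.\ $R_{\frac{k}{2}}(m+2) \geq k(q+1)+1$.

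I do not expect a genuine obstacle: all the heavy lifting is in Theorem~\ref{thm_Main} and Proposition~\ref{prop_ONPaley}. The only point requiring a little care is the first step — making fully explicit that ``no monochromatic transitive subtournament of order $m$ in $P_k(q)$'' follows from $\mathcal{K}_m(G_k(q))=0$ rather than merely being equivalent to it — together with checking that the random reassignment of color-$0$ edge pairs in the definition of $M_k^{\ast}(q)$ does not affect the vertex count, so that $M_k^{\ast}(q)$ genuinely witnesses the lower bound for $R_{k/2}(m+2)$.
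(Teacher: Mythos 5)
Your proposal is correct and follows essentially the same path as the paper's proof: translate $\mathcal{K}_m(G_k(q))=0$ into the absence of monochromatic transitive subtournaments of order $m$ in $P_k(q)$ via the isomorphisms $f_i$, apply Theorem~\ref{thm_Main}, and then use that $M_k^{\ast}(q)$ is a $\frac{k}{2}$-colored tournament of order $k(q+1)$ to conclude $R_{\frac{k}{2}}(m+2) \geq k(q+1)+1$. No gaps.
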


\begin{proof}
By definition, $\mathcal{K}_m(G_k(q))=0$ means that $G_k(q)$ contains no transitive subtournaments of order $m$. By the discussion at the start of this section, this implies $P_k(q)$ contains no transitive subtournaments of order $m$ \cite{MS}. Consequently, by Theorem \ref{thm_Main}, $M_k^{\ast}(q)$ contains no monochromatic transitive subtournament of order $m+2$. 
Recall, $M_k^{\ast}(q)$ is a tournament of order $n=k(q+1)$ whose edges are colored in $\frac{k}{2}$ colors, so $R_{\frac{k}{2}}(m+2) \geq k(q+1)+1$.
\end{proof}


\section{Application of Corollary \ref{cor_Main}}\label{sec_Results}
We now examine properties of $G_k(q)$ and apply Corollary \ref{cor_Main} to get improved lower bounds for certain directed Ramsey numbers.

We start with the case when $k=2$.
For all appropriate $q \leq 1583$ we found, by computer search, the order of the largest transitive subtournament of $G_2(q)$.
Then, from this data, we identified the largest $q$ such that $\mathcal{K}_m(G_k(q))=0$, for each $3 \leq m \leq 20$.
Call this $q_m$.
We then apply Corollary \ref{cor_Main} which yields $R(m+2) \geq \max(2(q_m+1)+1,q_{m+2}+1)$.
The results for $7 \leq m \leq 20$ are shown in Table \ref{tab_Largest_q_k2}.
($R(m)$ for $3 \leq m \leq 6$ are already known.)

\begin{table}[!htbp]
\centering
\begin{tabular}{| c | c | c | c | c | c | c | c | c | c | c | c | c | c | c |}
\hline
$m$ & $7$ & $8$ & $9$ & $10$ & $11$ & $12$ & $13$ & $14$ & $15$ & $16$ & $17$ & $18$ & $19$ & $20$\\
\hline
\hline
$q_m$  & $27$ & $47$ & $83$ & $107$ & $107$ & $199$ & $271$ & $367$ & $443$ & $619$ & $659$ & $971$ & $1259$ & $1571$\\
\hline
$R(m) \geq$ & $28$ & \textbf{57} & \textit{84} & \textit{108} & \textbf{169} & \textbf{217} & \textit{272} & \textbf{401} & \textbf{545} & \textbf{737} & \textbf{889} & \textbf{1241} & \textbf{1321} & \textbf{1945}\\
\hline
\end{tabular}
\vspace{3pt}
\caption{Lower Bounds for $R(m)$.}
\label{tab_Largest_q_k2}
\end{table}

The values of $q_m$ in Table \ref{tab_Largest_q_k2}, for $7 \leq m \leq 18$, confirm those of Sanchez-Flores \cite{SF2}, and, for $m=19$, that of Exoo \cite{ES}.
The best known lower bound for $m=7$ is $R(7) \geq 34$, due to Neiman, Mackey and Heule \cite{NMH}.
For $8 \leq m \leq 10$ and $12 \leq m \leq 19$ the previously best known lower bound was $R(m) \geq q_m+1$ \cite{ES}.
Also from \cite{ES} we have that $R(11) \geq 112$. 
So the values in bold in Table \ref{tab_Largest_q_k2}  represent an improvement to the previously best known lower bounds and the values in italics equal the best known lower bounds.

We also performed a similar exercise for $k =4,6,8$ and $10$, identifying, in each case, the largest $q$ such that $\mathcal{K}_m(G_k(q))=0$, for $3 \leq m \leq 10$. 
We will denote such $q$ as $q_{m,k}$.
Table \ref{tab_Largest_q_k>2} outlines these values.
The values in the last row of the table indicate the upper limit for $q$ in our search.
Note that values of $q_{m,k}$ close to this limit will not be optimal.

\begin{table}[!htbp]
\centering
\begin{tabular}{| c | c | c | c | c |}
\hline
$m$ & $k=4$ & $k=6$ & $k=8$ & $k=10$ \\
\hline
$3$ & $13$ & $43$ & $169$ & $71$ \\
$4$ & $125$ & $343$ & $953$ & $3331$ \\
$5$ & $157$ & $859$ & $2809$ & $6791$ \\
$6$ & $829$ & $4339$ & $15625$ & $33191$ \\
$7$ & $709$ & $4423$ & $26153$ & $43411$ \\
$8$ & $1709$ & $18523$ & $29929$ & $58771$ \\
$9$ & $3517$ & $29611$ & $29929$ & $59951$ \\
$10$ & $7573$ & $29959$ & $29929$ & $59971$ \\
\hline
$q<$ & $10000$ & $30000$ & $30000$ & $60000$\\
\hline
\end{tabular}
\vspace{6pt}
\caption{Largest $q$ found such that $\mathcal{K}_m(G_k(q))=0$.}
\label{tab_Largest_q_k>2}
\end{table}
\vspace{-12pt}
Now, $R_{\frac{k}{2}}(m) \geq q_{m,k}+1$, and, by Corollary \ref{cor_Main}, $R_{\frac{k}{2}}(m+2) \geq k(q_{m,k}+1)+1$ when $m \geq k$.
We note also that for $t \geq 2$ \cite[Prop. 5]{MT}
\begin{equation*}\label{for_LBMult}
R_t(m) \geq (R_{t-1}(m)-1)(R(m)-1) +1.  
\end{equation*}
It is already known that 
$R(3)=4$, $R(4)=8$ \cite{EM}, $R(5)=14$ \cite{RP}, $R(6)=28$ \cite{SF1},  $R(7) \geq 34$ \cite{NMH}, $R_2(3)=14$ \cite{BD}, $R_2(4) \geq 126$ and $R_3(3) \geq 44$ \cite{MS}.
We combine all this information, including values from Table \ref{tab_Largest_q_k2}, to get lower bounds on the Ramsey numbers $R_t(m)$ for $t \geq 2$ and $3 \leq m \leq 10$.
The results are shown in Table \ref{tab_LB_k>2}.

\begin{table}[!htbp]
\centering
\begin{tabular}{| c | c | c | c | c |}
\hline
$m$ & $t=2$ & $t=3$ & $t=4$ & $t \geq 5$ \\
\hline
\hline
$3$ & $14$ & $44$ & $170$ & $169 \cdot 3^{t-4}+1$ \\
\hline
$4$ & $126$ & \multicolumn{3}{| c |}{$125 \cdot 7^{t-2} +1$}\\
\hline
$5$ & \multicolumn{4}{| c |}{$13^{t} +1$}  \\
\hline
$6$ & $830$ & \multicolumn{3}{| c |}{$829 \cdot 27^{t-2} +1$}\\
\hline
$7$ & \multicolumn{4}{| c |}{$33^{t} +1$}  \\
\hline
$8$ & $3321$ & \multicolumn{3}{| c |}{$3320 \cdot 56^{t-2} +1$}\\
\hline
$9$ & \multicolumn{4}{| c |}{$83^{t} +1$}  \\
\hline
$10$ & \multicolumn{4}{| c |}{$107^{t} +1$}  \\
\hline
\end{tabular}
\vspace{6pt}
\caption{Lower bounds for $R_t(m)$.}
\label{tab_LB_k>2}
\end{table}
\vspace{-15pt}
\noindent
The general formulas in the cases $m=3,6, 8$ improve on what was previously known.
We note that the $m=8$ case is the only one where Corollary \ref{cor_Main} influences the results.


\vspace{12pt}

\end{document}